\documentclass[12pt,leqno,twoside,sort]{amsart}

\usepackage{amssymb,amsmath,amsthm,soul,color}

\usepackage[normalem]{ulem}

\usepackage{amssymb,amsmath,amsthm}

\usepackage{mathrsfs}

\usepackage[utf8]{inputenc}

\usepackage{todonotes}

\usepackage[left=2cm, right=2cm, top=2cm, bottom=2cm]{geometry}

\usepackage{url}

\usepackage[numbers,sort&compress]{natbib}

\usepackage{fixmath}

\usepackage[T1]{fontenc}
\usepackage{lmodern}
\usepackage{microtype}
\usepackage[normalem]{ulem}

\usepackage[colorlinks=true,urlcolor=blue,
citecolor=red,linkcolor=blue,linktocpage,pdfpagelabels,
bookmarksnumbered,bookmarksopen]{hyperref}

\linespread{1.5}

\usepackage{xcolor,cancel}

\linespread{1.2}

\newtheorem{Th}{Theorem}[section]
\newtheorem{Prop}{Proposition}[section]
\newtheorem{Lem}{Lemma}[section]

\theoremstyle{definition}

\newtheorem{Rem}{Remark}[section]

\newcommand{\C}{\mathbb{C}}
\newcommand{\R}{\mathbb{R}}

\newcommand{\cA}{{\mathcal A}}

\newcommand{\cC}{{\mathcal C}}

\renewcommand{\div}{\mathrm{div}\,}

\newcommand{\tu}{\widetilde{u}}
\newcommand{\tv}{\widetilde{v}}

\numberwithin{equation}{section}

\begin{document}

\title{Magnetostatic levitation and two related linear PDEs in unbounded domains}

\author[B. Bieganowski]{Bartosz Bieganowski}

\author[T. Cieślak]{Tomasz Cieślak}

\author[J. Siemianowski]{Jakub Siemianowski}

\address[B. Bieganowski and J. Siemianowski]{\newline\indent  	Institute of Mathematics,		\newline\indent 	Polish Academy of Sciences, \newline\indent ul. \'Sniadeckich 8, 00--656 Warszawa, Poland
\newline \indent and \newline
\indent Faculty of Mathematics and Computer Science,
	\newline\indent 
	Nicolaus Copernicus University in Toru\'{n} 
	\newline\indent 
	ul. Chopina 12/18, 87-100 Toruń, Poland}
\email{\href{mailto:bbieganowski@impan.pl}{bbieganowski@impan.pl}, \href{mailto:bartoszb@mat.umk.pl}{bartoszb@mat.umk.pl}}
\email{\href{mailto:jsiemianowski@impan.pl}{jsiemianowski@impan.pl}, \href{mailto:jsiem@mat.umk.pl}{jsiem@mat.umk.pl}}	

\address[T. Cieślak]{\newline\indent  	Institute of Mathematics,		\newline\indent 	Polish Academy of Sciences, \newline\indent ul. \'Sniadeckich 8, 00--656 Warszawa, Poland}
\email{\href{mailto:cieslak@impan.pl}{cieslak@impan.pl}}

\begin{abstract} 
We consider a problem occurring in a magnetostatic levitation. The problem leads to a linear PDE in a strip. In engineering literature a particular solution is obtained.
Such a solution enables one to compute lift and drag forces of the levitating object. It is in agreement with the experiment.
We show that such a solution is unique in a class of bounded regular functions.
Moreover, as a byproduct, we obtain nonstandard uniqueness results in two linear PDEs in unbounded domains.
One of them is an Eigenvalue problem for the Laplacian in the strip in the nonstandard class of functions.

\medskip

\noindent \textbf{Keywords:} magnetic levitation, Phragm\'{e}n--Lindel\"of principle, Eigenvalue problem in a strip
   
\noindent \textbf{AMS 2020 Subject Classification:} 35B53, 35Q60, 78-10
\end{abstract}

\maketitle

\section{Introduction}

Magnetic levitation is currently a hot topic. All around the world there are attempts to use it in an ecological transport. Maglev (from magnetic levitation) and similar technologies are believed to offer new ways of transportation. Those include, for instance trains in Japan (MLX01), commercial one in Shanghai (from the city centre to the airport) and airport lines in Incheon, South Korea. 

In the present article we are concerned with a simple model of magnetostatic levitation. 
We follow an exposition in \cite{RJH}. The author describes there a straightforward analytical approach 
to maglev. The analytical model is then compared with the results of experiments. 

Let us begin with the presentation of the model  studied in \cite{RJH} with some justification.
We consider the standard Cartesian $xyz$ coordinates and a conducting plate moving parallel to a sinusoidally-distributed current sheet $\mathbf{J}=(0,0,J_0 \cos(x/\lambda))$ with $\lambda$ being the current sheet wavelength.
The conducting plate has a uniform thickness and conductivity $\sigma$.
We assume that the plate bottom surface is located in the $\{y=0\}$ plane and has a constant velocity in the positive $x$ direction $\mathbf{v}=(v_x,0,0)$.
The system is described (see \cite{CB, RJH, KOF}) by the magnetostatic case of Maxwell's equations
\[
\nabla\cdot  \mathbf{B}=0, \;\; \nabla\times \mathbf{B}=\mu_0 \mathbf{J},
\]
where $\mathbf{B}=(B_x, B_y, B_z)$ stands for the magnetic field and  $\mu_0$ is the vacuum permeability.
The movement of the plate through the magnetic field induces the motional electromotive force and the electric field $\mathbf{E}$ which is given by
\[
\mathbf{E} = \mathbf{v}\times \mathbf{B},
\]
see \cite[Chapter 4]{KOF}. We incorporate Ohm's law
\[
\mathbf{J} = \sigma \mathbf{E}.
\]
If we combine the above equations, we see that
\[
\nabla \cdot \mathbf{B} = 0\quad \text{and} \quad \nabla \times \mathbf{B} = \mu_0 \sigma (\mathbf{v}\times \mathbf{B}).
\]
We study the system in the two dimensions only by assuming that the magnetic field $\mathbf{B}$ does not depend on the $z$ coordinate and $B_z = 0$, and all derivatives with respect to $z$ are zero.
This simplification implies that 
\begin{equation}\label{pomocnicze}
\frac{\partial B_x}{\partial x}=-\frac{\partial B_y}{\partial y}
\end{equation}
and also that only the last coordinate in $\nabla \times \mathbf{B} = \mu_0 \sigma (\mathbf{v}\times \mathbf{B})$ is nonzero
\[
\frac{\partial B_y}{\partial x} - \frac{\partial B_x}{\partial y} = \mu_0 \sigma v_x B_y.
\]
Differentiating both sides with respect to $x$ and using \eqref{pomocnicze} give rise to the following elliptic equation
\begin{equation}\label{rownanie}
\Delta B_y=\mu_0\sigma v_x \frac{\partial B_y}{\partial x},
\end{equation}
which is a starting problem for the forthcoming mathematical analysis.

An author of \cite{RJH} finds a solution of (\ref{rownanie})
of the form 
\begin{equation}\label{rozw}
B_y= B_0 \exp(-\alpha y)\sin\left[(x/\lambda)+by\right].
\end{equation}
Notice that such a solution is not of separated variables form, it seems simply guessed.
By (\ref{pomocnicze}), one computes $B_x$ and, in the end, both drag and lift forces
are calculated using solution $(B_x, B_y)$. They are compared with the experimentally obtained ones.
If the plate is thin enough that the edge effects as well as eddy currents can be neglected, 
analytical solutions are in agreement with experimentally obtained data.

There are two questions. How does one arrive at a solution of this particular form? And, more importantly, is it a unique solution to (\ref{rownanie})? 
The second one can be treated in the following way.
 Consider (\ref{rownanie}) in a domain being an infinite 
two-dimensional strip (this is our model of a plate) under the Dirichlet boundary conditions on the top and a bottom of a strip.
Is the solution unique? The equation is linear, however the domain, an infinite strip, suggests that the problem is not trivial.
Our paper is devoted to answering the above question. To this end, we use some modification of a particular maximum principle, going back to
E. Hopf. Section \ref{sect:2} is devoted to this problem. Last, but not least, we observe that our problem is related to other two interesting 
linear PDEs. On the one hand, a particular degenerate problem in a half-space, not satisfying the $\mathcal{A}$-harmonicity (cf. \cite{He}) is considered
in Section \ref{sect:3}. We prove the correspondence between the problem in a half-space and our original problem in a strip. This allows us to state the uniqueness results concerning the degenerate problem in a half-space in a space of regular functions. The short Section \ref{sect:4} is devoted to the Eigenproblem of a Laplacian in a strip, in a nonstandard class of functions. Finally, last section presents some duality results 
in the case of a degenerated operator introduced in Section \ref{sect:3}. 

In what follows, $\lesssim$ denotes the inequality up to a multiplicative constant.

\section{The maximum principle and uniqueness of solutions}\label{sect:2}

In what follows we consider the equation \eqref{rownanie} with $k := \mu_0 \sigma v_x$ in a strip. For convenience we consider a strip of width $\pi$. Since the solution \eqref{rozw} is only bounded (does not vanish as $x \to \pm \infty$ and is periodic in $x$), we are interested in a uniqueness result in the class of bounded functions.

Let $\Omega := \mathbb{R} \times (0, \pi)$ and $k$, $\lambda$ be reals with $\lambda \geq 0$.
The following theorem comes in fact from \cite{G} and \cite{H}.
There are no explicit results for the differential operator $L := \Delta - k\partial_x -\lambda$ and the strip domain.
Nevertheless, there is a comment on the top of the page 420 of \cite{H} that the Phragm\'{e}n-Lindel\"of principle is valid in such a case.
We present the proof with suitable adjustments for the sake of the clarity.

\begin{Th}\label{MaxPrinciple}
Let $\Omega = \mathbb{R} \times (0, \pi)$, $k \in \R$ and $\lambda \geq 0$. Suppose that $u \in \cC^2 (\Omega) $ is a solution to
\[
\Delta u - k u_x - \lambda u = 0 \quad \mbox{in} \  \Omega
\]
and $\liminf_{(x^\prime,y^\prime)\to (x,y)}u(x^\prime,y^\prime) \geq 0$, for every $(x,y) \in \partial \Omega$.
Let $\mu_R := \displaystyle \min_{x^2+y^2=R^2,\ (x,y) \in \Omega} u(x,y)$.
If
$$
\lim_{n \to +\infty} \frac{\mu_{R_n}}{R_n} = 0, \ \mbox{for some } R_n\to +\infty,
$$
then $u \geq 0$ in $\Omega$.
\end{Th}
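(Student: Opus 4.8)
The plan is to prove nonnegativity by a Phragm\'en--Lindel\"of argument: exhibit a positive supersolution $h$ of $L := \Delta - k\partial_x - \lambda$ that grows as $|x| \to \infty$, add a small multiple $\varepsilon h$ to $u$, apply the weak maximum principle on the bounded pieces $\Omega_R := \Omega \cap \{x^2 + y^2 < R^2\}$, and finally let $R \to \infty$ along $(R_n)$ and $\varepsilon \to 0^+$.

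First I would construct the barrier. Seeking $h(x,y) = g(x)\cos\big(\delta(y - \tfrac{\pi}{2})\big)$ with a fixed $\delta \in (0,1)$, a direct computation gives $Lh = \big(g'' - k g' - (\delta^2 + \lambda) g\big)\cos\big(\delta(y - \tfrac{\pi}{2})\big)$, and since $\delta < 1$ the factor $\cos\big(\delta(y-\tfrac{\pi}{2})\big)$ is strictly positive on $[0,\pi]$; in particular it does not vanish at the corners $y=0,\pi$. Choosing $g(x) = e^{m_+ x} + e^{m_- x}$, where $m_\pm = \tfrac12\big(k \pm \sqrt{k^2 + 4(\delta^2+\lambda)}\big)$ are the roots of the characteristic polynomial, makes the bracket vanish, so $Lh = 0$; because $\delta > 0$ the product of the roots is $-(\delta^2+\lambda) < 0$, whence $m_+ > 0 > m_-$, so $g > 0$ and $g(x) \to +\infty$ exponentially as $x \to \pm\infty$. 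Thus $h > 0$ on $\overline{\Omega}$, $Lh \le 0$, and on the arc $\{x^2+y^2 = R^2\} \cap \Omega$ one has $h \ge c_\delta\, e^{\rho \sqrt{R^2 - \pi^2}}$ for constants $c_\delta, \rho > 0$ independent of $R$.

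Next, fix $\varepsilon > 0$ and set $v := u + \varepsilon h$. Since $Lu = 0$ and $Lh \le 0$, $v$ is a supersolution, $Lv \le 0$, and the zeroth-order coefficient of $L$ equals $-\lambda \le 0$, so the weak maximum principle (in the form controlling the minimum of a supersolution, with boundary data read off through $\liminf$) applies on each bounded $\Omega_{R_n}$. On the top and bottom parts of $\partial\Omega_{R_n}$ the hypothesis $\liminf u \ge 0$ together with $h > 0$ gives $\liminf v \ge 0$; on the arc we use $u \ge \mu_{R_n}$ and the lower bound on $h$ to get $v \ge \mu_{R_n} + \varepsilon c_\delta\, e^{\rho\sqrt{R_n^2 - \pi^2}}$. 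Because $\mu_{R_n} = o(R_n)$, the exponentially large barrier term comfortably dominates for all large $n$, so $v \ge 0$ on the whole of $\partial\Omega_{R_n}$; the maximum principle then yields $v \ge 0$ in $\Omega_{R_n}$. As $\Omega_{R_n} \uparrow \Omega$ this gives $u + \varepsilon h \ge 0$ throughout $\Omega$, and letting $\varepsilon \to 0^+$ yields $u \ge 0$.

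The main obstacle is the barrier, not the limiting argument. The drift term $-k\partial_x$ rules out the symmetric harmonic barrier $\cosh(x)\sin y$, which fails to be a supersolution once $k \neq 0$, while the drift-adapted one-variable candidate $e^{cx}$ grows at only one end of the strip; moreover any barrier proportional to $\sin y$ vanishes at the corners where the circular arc meets $\partial\Omega$, so the arc estimate would break down exactly there. The product form above is engineered to resolve all three issues at once: the two exponentials force growth as $x \to \pm\infty$, the characteristic equation absorbs the drift and the $-\lambda$ term, and taking $\delta \in (0,1)$ keeps the $y$-factor bounded away from zero up to and including the corners. A secondary technical point to handle carefully is that $u$ is only assumed $\mathcal{C}^2$ in the open strip with a $\liminf$ condition at the boundary, so the weak maximum principle must be invoked in its $\liminf$/$\limsup$ formulation, or applied on slightly shrunken strips $\{\eta < y < \pi - \eta\}$ with $\eta \to 0^+$.
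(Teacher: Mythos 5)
Your proof is correct, and it takes a genuinely different route from the paper's. The paper constructs an $R$-dependent family of \emph{bounded} barriers $v_R(x,y)=f\left(h\left(x/R,\,y/R\right)\right)$, where $h$ is the harmonic angle function of the half-disc (zero on the diameter, one on the arc) and $f$ is chosen so that $f''/f'=-\pi^2|k|/2$, which absorbs the drift via the estimate $\left|kRh_x\right|/\left|\nabla h\right|^2\le \pi^2|k|/2$; the crux is that $Rv_R$ stays pointwise bounded as $R\to\infty$, so the correction $\sigma_{R_n}v_{R_n}=(\sigma_{R_n}/R_n)\,(R_n v_{R_n})$ vanishes in a single limit $n\to\infty$ exactly under the hypothesis $\mu_{R_n}/R_n\to 0$. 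You instead build one fixed, $R$-independent, exponentially growing exact solution $h=(e^{m_+x}+e^{m_-x})\cos\left(\delta(y-\pi/2)\right)$ of $Lh=0$, with the characteristic equation absorbing both the drift and $-\lambda$, and with $\delta\in(0,1)$ keeping the $y$-factor positive up to the corners, and you run the classical $\varepsilon$-barrier Phragm\'en--Lindel\"of scheme with two limits ($n\to\infty$, then $\varepsilon\to 0^+$). Both arguments rest on the weak maximum principle on $\Omega\cap B_{R_n}$ in its $\liminf$ formulation applied to a supersolution, so neither has an advantage on that technical point. Your barrier is more elementary (an ODE plus separation of variables, rather than the conformal-flavoured $f\circ h$ construction), and it in fact proves a slightly stronger statement: since the arc contribution is dominated by $\varepsilon c_\delta e^{\rho\sqrt{R_n^2-\pi^2}}$, your argument tolerates $\mu_{R_n}\ge -o\left(e^{\rho R_n}\right)$, i.e.\ exponential growth at any rate $\rho$ strictly below the critical rate $\tfrac12\left(\sqrt{k^2+4(1+\lambda)}-|k|\right)$ attained as $\delta\uparrow 1$; this is consistent with, and nearly sharp against, the exponentially growing solutions exhibited in the paper's remark after Theorem \ref{Thm:uniqueness}, whereas the paper's construction is tailored precisely to the sublinear hypothesis and stays closer to the Gilbarg--Hopf original. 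One caveat: your parenthetical fallback of applying the maximum principle on shrunken strips $\{\eta<y<\pi-\eta\}$ does not work as stated, because nothing is known about $u$ on the lines $y=\eta$ and $y=\pi-\eta$; your primary route, the $\liminf$ form of the maximum principle on $\Omega\cap B_{R_n}$, is the correct one (and is also what the paper uses).
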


\begin{proof}
Let $B_R := \{ (x,y) \in \R^2 \mid x^2 + y^2 \leq R^2 \}$. We shall construct a family of functions $v_R \in \cC^2 ( \Omega \cap B_R )$, $R>\pi$,  which is continuous on $\overline{\Omega \cap B_R} \setminus \{ (-R,0), (R,0) \}$ and satisfies:
\begin{itemize}
\item[(i)] $v_R = 0$ for $y = 0$, $v_R = 1$ for $x^2+y^2=R^2$ and $y > 0$, and $0 < v_R \leq 1$ for $y=\pi$,
\item[(ii)] $0 < v_R < 1$ in $\Omega \cap B_R$ and $L[v_R] \leq 0$ in $\Omega \cap B_R$,
\item[(iii)] for every $(x,y) \in \Omega \cap B_R$, $R v_R (x,y)$ is bounded as $R \to +\infty$.
\end{itemize}

\begin{figure}
  \includegraphics[width=\linewidth]{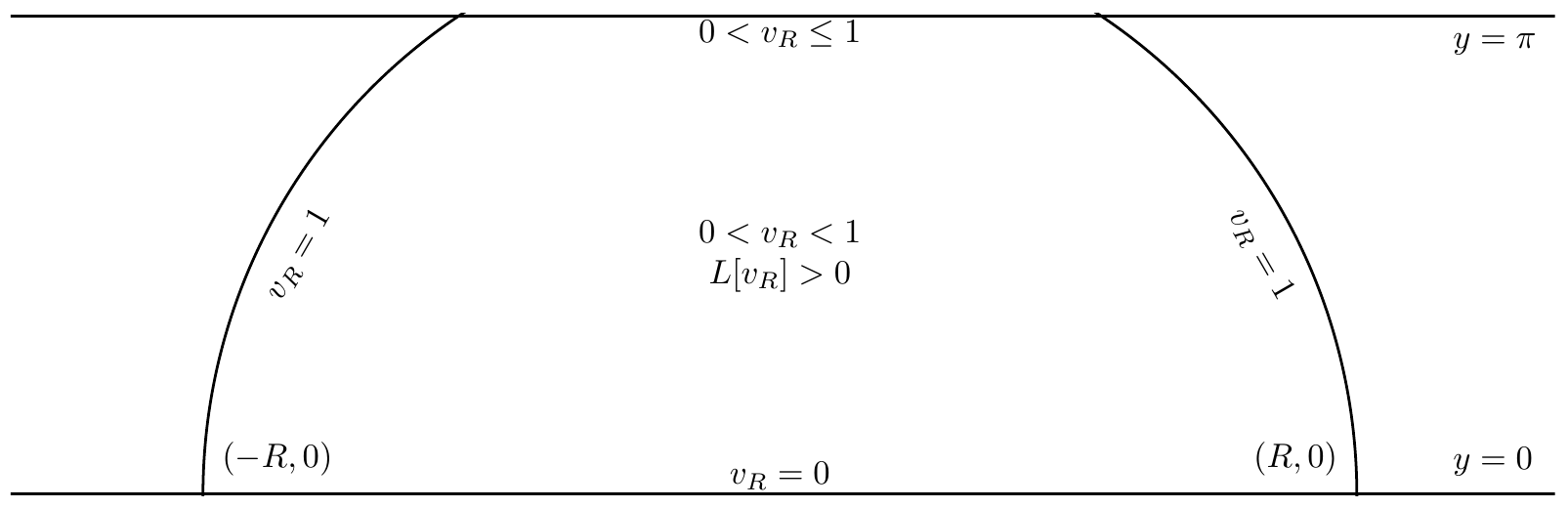}
  \caption{Properties of $v_R$.}
  \label{fig:1}
\end{figure}

For now, suppose that a function $v_R$ satisfying (i)--(iii) exists. Let $\sigma_R := \min \{0, \mu_R \} \leq 0$ and consider the function
$$
u_R (x,y ) := u(x,y) - \sigma_R v_R (x,y),\quad (x,y)\in \Omega\cap B_R.
$$
Then $u_R \in \cC^2 (\Omega \cap B_R)$ and the hypothesis on $u$ and the condition (i) guarantee that $u_R\geq 0$ on $\partial (\Omega\cap B_R)$ in the sense of the lower limit.
By (ii), we have
$$
L[u_R] = L[u] - \sigma_R L[v_R] = -\sigma_R L[v_R] \leq  0, \quad\text{in} \ \Omega \cap B_R.
$$
The maximum principle on $\Omega \cap B_R$ yields  $u_R \geq 0$ in $\Omega \cap B_R$ or, equivalently,
$$
u \geq \frac{\sigma_R}{R} R v_R\quad \text{in} \ \Omega \cap B_R.
$$
Let us fix $(x,y) \in \Omega$. There is $n_0$ such that $(x,y) \in \Omega \cap B_{R_n}$ for every $n \geq n_0$, and so
$$
u(x,y) \geq \frac{\sigma_{R_n}}{R_n} R_n v_{R_n} (x,y) = \min \left\{0, \frac{\mu_{R_n}}{R_n} \right\} R_n v_{R_n} (x,y)
$$
for $n \geq n_0$. Passing to the limit as $n \to + \infty$ and using (iii), we deduce that $u \geq 0$ in $\Omega$.

We have shown it is sufficient to construct $v_R$ satisfying (i)--(iii). Consider the function $h : \Omega \cap B_1 \rightarrow \R$ given by
$$
h(x,y) := \frac{2}{\pi} \left( \arctan \frac{y}{x+1} - \arctan \frac{y}{x-1} \right),\quad (x,y)\in \Omega\cap B_1.
$$
Geometrically, $h$ is represented by the following formula
$$
h(x,y) = \frac{2}{\pi} (\pi - \theta(x,y)) ,
$$
where the angle $\theta(x,y)$ is shown on Fig. \ref{fig:2}.

\begin{figure}
  \includegraphics[width=\linewidth]{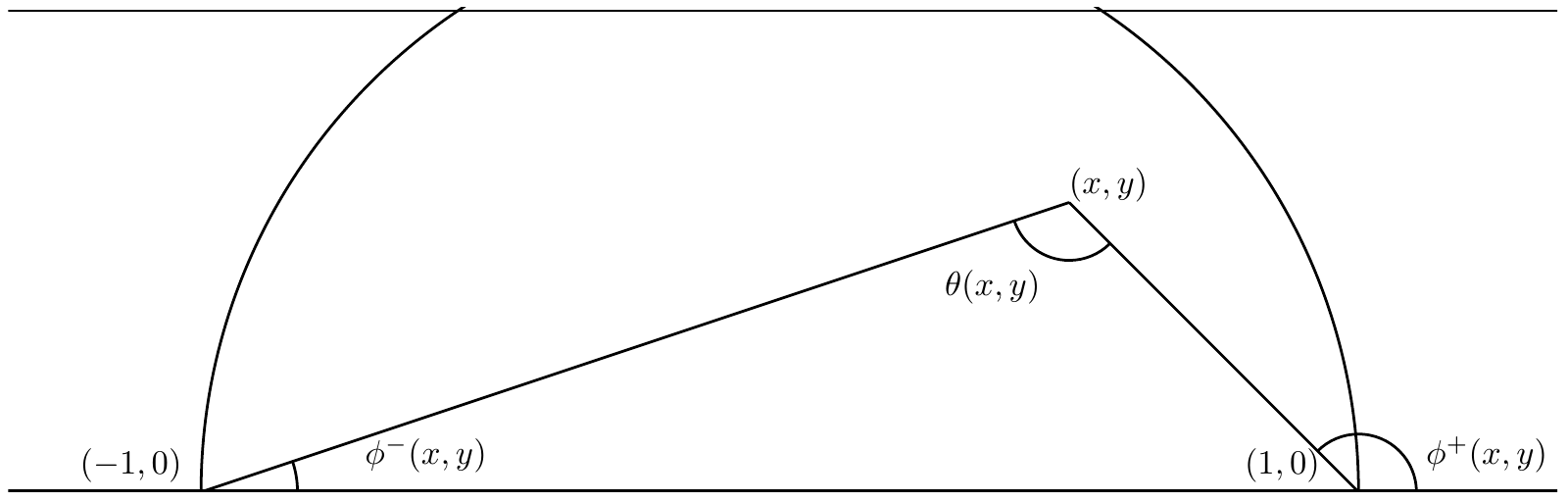}
  \caption{Definition of $\theta$.}
  \label{fig:2}
\end{figure}

We note the following straightforward properties of $h$:
\begin{itemize}
\item $h$ is harmonic in $\Omega\cap B_1$,
\item $h(x,0)=0$,
\item $h(x,y) = 1$ for $x^2+y^2=1$, $y>0$,
\item $0 < h <  1$ in $\Omega \cap B_1$,
\item $h$ is smooth on $\Omega \cap B_1$ and continuous on $\overline{\Omega \cap B_1} \setminus \{ (-1,0), (1,0) \}$.
\end{itemize} 
We define $v_R : \Omega \cap B_R \rightarrow \R$ by
$$
v_R(x,y) := f \left( h \left( \frac{x}{R}, \frac{y}{R} \right) \right),
$$
where $f : \R \rightarrow \R$ is given by
$$
f(t) := \frac{\int_0^t e^{-\frac{\pi^2 |k| \zeta}{2}} \, d\zeta}{\int_0^1 e^{-\frac{\pi^2 |k| \zeta}{2}} \, d\zeta}.
$$
Since $f$ is smooth, $v_R \in \cC^2 (\Omega \cap B_R)$ and $v_R$ is continuous on $\overline{\Omega \cap B_R} \setminus \{ (-R,0), (R,0) \}$.
Obviously $f(0) = 0$, $f(1)=1$ and $f(t) \in (0,1)$, for $t \in (0,1)$, and thus $v_R$ satisfies (i) and the first part of (ii). 

For $(x,y) \in \Omega \cap B_R$ we have
$$
R v_R(x,y) = \underbrace{\frac{f \left( h \left( \frac{x}{R}, \frac{y}{R} \right) \right) - f(0)}{h \left( \frac{x}{R}, \frac{y}{R} \right) - h(0,0)}}_{\to f^\prime(0)} \cdot \underbrace{\frac{h \left( \frac{x}{R}, \frac{y}{R} \right)}{\frac{1}{R}}}_{\to \frac{4y}{\pi}}\to \frac{4y}{\pi}f^\prime(0),\quad \text{as}\ R\to+\infty,
$$
where we used L'H\^{o}pital's rule to calculate the second term.
Hence $R v_R$ stays bounded as $R \to +\infty$ and (iii) holds. 
The only thing left is to check that 
$$
L[v_R] \leq 0 \mbox{ on } \Omega \cap B_R.
$$
We exploit the fact that $h$ is harmonic to get
\begin{align*}
(\Delta - k \partial _x)[v_R(x,y)] &= \frac{1}{R^2}f'' \left( h \left( \frac{x}{R}, \frac{y}{R} \right) \right) \left| \nabla h \left( \frac{x}{R}, \frac{y}{R} \right) \right|^2 - \frac{k}{R} f' \left( h \left( \frac{x}{R}, \frac{y}{R} \right) \right) h_x  \left( \frac{x}{R}, \frac{y}{R} \right).
\end{align*}
Since $f'  > 0$ and $\left| \nabla h \left( \frac{x}{R}, \frac{y}{R} \right) \right|^2 > 0$, for $(x,y) \in \Omega \cap B_R$, we divide
\begin{equation}\label{eq:1}
\frac{(\Delta - k \partial _x)[v_R(x,y)]}{\frac{1}{R^2} f' \left( h \left( \frac{x}{R}, \frac{y}{R} \right) \right) \left| \nabla h \left( \frac{x}{R}, \frac{y}{R} \right) \right|^2} = \frac{f'' \left( h \left( \frac{x}{R}, \frac{y}{R} \right) \right)}{f' \left( h \left( \frac{x}{R}, \frac{y}{R} \right) \right)} - \frac{k R h_x  \left( \frac{x}{R}, \frac{y}{R} \right)}{\left| \nabla h \left( \frac{x}{R}, \frac{y}{R} \right) \right|^2}.
\end{equation}
Denoting $A := (R+x)^2 + y^2$ and $B := (R-x)^2 + y^2$, we obtain
\begin{align*}
h_x \left( \frac{x}{R}, \frac{y}{R} \right) = \frac{2}{\pi} R \left( \frac{-y}{A} + \frac{y}{B} \right),\qquad
h_y \left( \frac{x}{R}, \frac{y}{R} \right) = \frac{2}{\pi} R \left( \frac{x+R}{A} + \frac{-x+R}{B} \right)
\end{align*}
and therefore
\begin{align*}
\left| \nabla h \left( \frac{x}{R}, \frac{y}{R} \right) \right|^2 &= \frac{4}{\pi^2} R^2 \left(\frac{1}{A} + \frac{1}{B} + 2\frac{R^2-x^2-y^2}{AB} \right) > \frac{4}{\pi^2} R^2 \left(\frac{1}{A} + \frac{1}{B} \right), \quad \mbox{for } (x,y) \in \Omega \cap B_R.
\end{align*}
It follows that
\begin{equation}\label{y-bounded}
\frac{\left| k R h_x  \left( \frac{x}{R}, \frac{y}{R} \right) \right|}{\left| \nabla h \left( \frac{x}{R}, \frac{y}{R} \right) \right|^2} \leq \frac{|k| R^2 \frac{2}{\pi} \left( \frac{|y|}{A}+\frac{|y|}{B} \right)}{\frac{4}{\pi^2} R^2 \left( \frac{1}{A}+\frac{1}{B} \right)} = \frac{|k| \pi |y|}{2} \leq  \frac{|k| \pi^2}{2}.
\end{equation}
Observe that
$$
\frac{f''(t)}{f'(t)} = - \frac{\pi^2 |k|}{2},
$$
so \eqref{eq:1} is nonpositive and, consequently, $(\Delta - k \partial _x)[v_R]\leq 0$ in $\Omega \cap B_R$.
Finally, we use the fact that $v_R > 0$ in $\Omega$ to obtain
\[
L[v_R]  = (\Delta - k \partial _x)[v_R] - \lambda v_R \leq 0
\]
what completes the proof.
\end{proof}

\begin{Rem}
If the differential operator in the equation in the statement of Theorem \ref{MaxPrinciple} would also include the first derivative with respect to $y$, we would also have terms like $|x|$ in \eqref{y-bounded} which couldn't be bounded in such an easy manner.
\end{Rem}

As an immediate consequence of Theorem \ref{MaxPrinciple}, we obtain the following uniqueness result in the class of functions with sublinear growth.

\begin{Th}\label{Thm:uniqueness}
Let $\Omega = \R \times (0,\infty)$, $f:\Omega\to\R$ and $g$, $h:\R\to\R$ be continuous, $k$ be an arbitrary real constant. If $\lambda \geq 0$, then the problem 
\begin{equation}\label{problem}
\left\{ \begin{array}{ll}
\Delta u - k u_x -\lambda u= f(x,y), & \quad \mbox{in} \ \Omega, \\
u(x, 0) = g(x), \ u(x,\pi)=h(x), & \quad x \in \R
\end{array} \right.
\end{equation}
has at most one solution in the class $\left\{u\in \cC^2(\Omega)\cap\cC(\overline{\Omega})\mid \lim_{\sqrt{x^2+y^2}\to+\infty} \frac{u(x,y)}{\sqrt{x^2+y^2}} = 0\right\}$.
In~particular, there is at most one solution in the class $\cC^2(\Omega) \cap \cC_b(\overline{\Omega})$.
\end{Th}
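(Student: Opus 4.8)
The plan is to reduce the uniqueness statement to a two-sided application of the maximum principle of Theorem \ref{MaxPrinciple}. First I would suppose that $u_1, u_2$ are two solutions of \eqref{problem} lying in the stated class and set $w := u_1 - u_2$. Writing $L := \Delta - k\partial_x - \lambda$, linearity gives $L[w] = 0$ in $\Omega$, while the identical Dirichlet data yield $w(x,0) = 0$ and $w(x,\pi) = 0$ for all $x \in \R$. Since $u_1, u_2 \in \cC(\overline{\Omega})$ we have $w \in \cC(\overline{\Omega})$, hence $w \equiv 0$ on $\partial\Omega$; in particular $\liminf_{(x',y')\to(x,y)} w(x',y') = 0 \geq 0$ for every $(x,y) \in \partial\Omega$, and the same trivially holds for $-w$. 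Moreover $w$ inherits the sublinear growth $\lim_{\sqrt{x^2+y^2}\to+\infty} w(x,y)/\sqrt{x^2+y^2} = 0$.

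Next I would verify the growth hypothesis of Theorem \ref{MaxPrinciple} for both $w$ and $-w$. With $\mu_R := \min_{x^2+y^2=R^2,\,(x,y)\in\Omega} w(x,y)$, sublinear growth means that for every $\eps > 0$ there is $M$ such that $|w(x,y)| \leq \eps\sqrt{x^2+y^2}$ whenever $\sqrt{x^2+y^2} > M$; restricting to the arc $x^2+y^2=R^2$ with $R > M$ gives $|\mu_R| \leq \eps R$, so $\mu_R/R \to 0$ as $R \to +\infty$, in particular along any sequence $R_n \to +\infty$. The identical estimate holds with $w$ replaced by $-w$, since $|{-w}| = |w|$. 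Theorem \ref{MaxPrinciple} applied to $w$ then gives $w \geq 0$ in $\Omega$, and applied to $-w$ it gives $-w \geq 0$, i.e. $w \leq 0$. Hence $w \equiv 0$ and $u_1 = u_2$, which is the desired uniqueness.

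Finally, the ``in particular'' clause is immediate: any $u \in \cC^2(\Omega)\cap\cC_b(\overline{\Omega})$ is bounded, say $|u| \leq C$, so $|u(x,y)|/\sqrt{x^2+y^2} \leq C/\sqrt{x^2+y^2} \to 0$, whence $\cC^2(\Omega)\cap\cC_b(\overline{\Omega})$ is contained in the sublinear-growth class and uniqueness there follows from the first part. I do not expect a genuine obstacle here, as the argument is essentially a bookkeeping reduction to the already-established Theorem \ref{MaxPrinciple}. The only point requiring any care is the short estimate above, which converts the function-class condition $u/\sqrt{x^2+y^2}\to 0$ into the precise hypothesis $\mu_{R_n}/R_n \to 0$ needed by the maximum principle, and the observation that it must be applied to both $w$ and $-w$ in order to conclude $w\equiv 0$ rather than merely a one-sided bound.
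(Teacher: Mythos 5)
Your proof is correct and follows exactly the paper's approach: the paper likewise applies Theorem \ref{MaxPrinciple} to $u_1-u_2$ and to $u_2-u_1$, and your write-up merely spells out the verification of the hypotheses (zero boundary data and $\mu_{R_n}/R_n\to 0$ from sublinear growth) that the paper leaves implicit.
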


\begin{proof}
If $u_1, u_2$ are solutions to \eqref{problem} in the class $\left\{u\in \cC^2(\Omega)\cap\cC(\overline{\Omega})\mid \lim_{\sqrt{x^2+y^2}\to+\infty} \frac{u(x,y)}{\sqrt{x^2+y^2}} = 0\right\}$, we apply Theorem \ref{MaxPrinciple} to $u_1-u_2$ and to $u_2-u_1$ to get that $u_1 - u_2 \equiv 0$.
\end{proof}

\begin{Rem}
Let us emphasize that the growth condition (or boundedness) on the solution is essential.
Consider the homogeneous equation with null boundary data
\[
\begin{cases}
\Delta u -ku_x =0,&\text{in }\Omega\\
u(x,0) = u(x,\pi) = 0 &x\in \R.
\end{cases}
\]
One solution is obviously $u \equiv 0$ but the separation of variables method yields other solutions
\[
u_l(x,y) = \left ( A e^{\frac{k+\sqrt{k^2+4l^2}}{2}x} + B e^{\frac{k-\sqrt{k^2+4l^2}}{2}x} \right ) \sin l y,\quad l \in \mathbb{N}, \,A,\, B \in \mathbb{R}.
\]
However those solutions grow exponentially as $x\to \pm \infty$.
\end{Rem}

\section{The problem on the punctured half-space}\label{sect:3}

As we will see in a moment, the problem \eqref{problem} is connected to the following equation
$$
\div ( |\mathbf{x}|^{-k} \nabla_\mathbf{x} \tu) = 0, \quad \mathbf{x} \in \R_+^2 := \R \times (0,+\infty).
$$
Let us recall that if $\cA : U \times \R^N \rightarrow \R^N$, where $U$ is an open subset of $\R^N$, is a Carath\'eodory function\footnote{Here it means that $\cA(\cdot, \xi)$ is measurable for every $\xi \in \R^N$ and $\cA(\mathbf{x}, \cdot)$ is continuous for a.e. $\mathbf{x} \in U$.}, then any solution to
$$
\div \cA(\mathbf{x}, \nabla \tu) = 0, \quad \mathbf{x} \in U
$$
is called a $\cA$-harmonic map in $U$. Usually, one needs to assume additionally the following growth conditions
\begin{equation}\label{A-growth}
\cA(\mathbf{x},\xi) \cdot \xi \gtrsim w(\mathbf{x}) |\xi|^p, \quad |\cA(\mathbf{x},\xi)| \lesssim w(\mathbf{x}) |\xi|^{p-1}, \quad \mathbf{x} \in U, \ \xi \in \R^N,
\end{equation}
where $p > 1$ and $w$ is a $p$-admissible weight (see \cite[Section 3, Section 6]{He}). In our case $\cA(\mathbf{x}, \xi) = |\mathbf{x}|^{-k} \xi$ does satisfy \eqref{A-growth} with $p=2$, $w(\mathbf{x}) = |\mathbf{x}|^{-k}$ and $k < 2$ (see \cite[Corollary 15.35]{He}). Therefore, Theorem \ref{thm:1} may be seen as a generalization of uniqueness results of \cite[Section 3]{He} to the case of the unbounded domain $\R_+^2$.

From Theorem \ref{Thm:uniqueness}, we know that $u \equiv 0$ is the unique solution to
\begin{align*}
\left\{ \begin{array}{ll}
\Delta u - k u_x=0, & \quad \mbox{in } \Omega, \\
u(x,0) = u(x,\pi) = 0, & \quad x \in \R
\end{array} \right.
\end{align*}
in $\cC^2(\Omega) \cap \cC_b(\overline{\Omega})$.

Let us set $z := x + iy$ and treat $\Omega \subset \C$ as a subset of the complex plane $\C$. We introduce the conformal mapping $z \mapsto w = e^z$ (see Figure \ref{fig:3}) which transforms a strip onto a half-space
$$
\Omega \ni z \mapsto w \in \R_+ ^2
$$
and we set $\tu(w) := u(z)$. 

\begin{figure}
  \includegraphics[width=\linewidth]{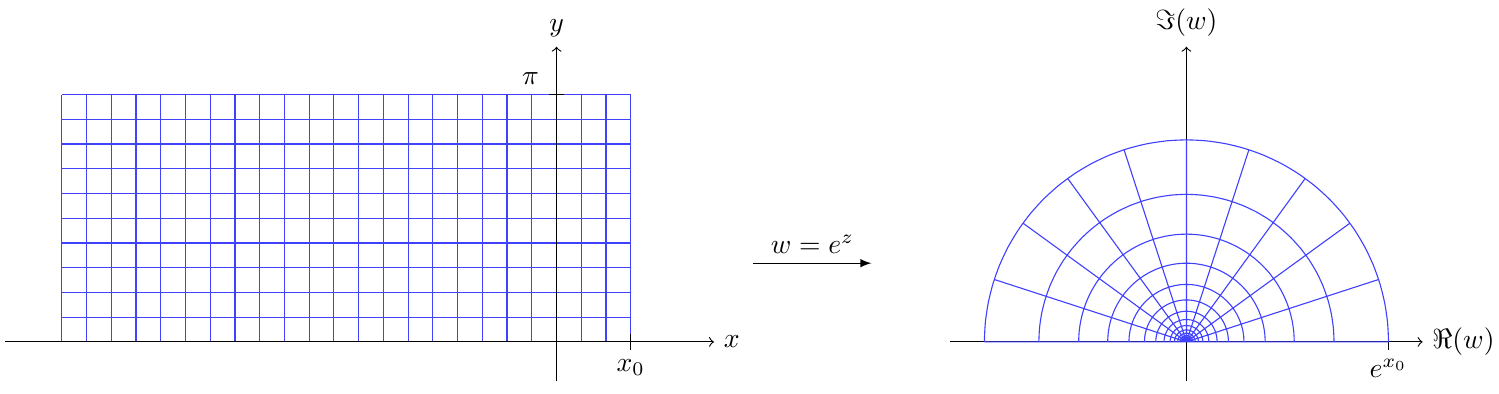}
  \caption{The map $w=e^z$.}
  \label{fig:3}
\end{figure}

We recall the Wirtinger derivatives $\frac{\partial}{\partial z} = \frac12 \left( \frac{\partial}{\partial x} - i \frac{\partial}{\partial y} \right)$ and $\frac{\partial}{\partial \overline{z}} = \frac12 \left( \frac{\partial}{\partial x} + i \frac{\partial}{\partial y} \right)$.
Since $w = e^z$ and $\overline{w} = e^{\overline{z}}$ we obtain
\begin{align*}
\frac{\partial u}{\partial z} = \frac{\partial \tu}{\partial w} \frac{\partial w}{\partial z} + \frac{\partial \tu}{\partial \overline{w}} \frac{\partial \overline{w}}{\partial z} = \frac{\partial  \tu}{\partial w} w, \\
\frac{\partial u}{\partial \overline{z}} = \frac{\partial \tu}{\partial w} \frac{\partial w}{\partial \overline{z}} + \frac{\partial \tu}{\partial \overline{w}} \frac{\partial \overline{w}}{\partial \overline{z}} =  \frac{\partial \tu}{\partial \overline{w}} \overline{w}
\end{align*}
Hence
$$
\frac{\partial^2 u}{\partial \overline{z} \partial z} = \frac{\partial}{\partial \overline{z}} \left( \frac{\partial \tu}{\partial w} w \right) = \frac{\partial^2 \tu}{\partial \overline{w} \partial w} |w|^2.
$$
Thus we can rewrite $\Delta u - k u_x = 0$ as
$$
4 \frac{\partial^2 \tu}{\partial \overline{w} \partial w} |w|^2 - k \left( \frac{\partial  \tu}{\partial w} w + \frac{\partial \tu}{\partial \overline{w}} \overline{w} \right) = 0
$$
and by using the variable $w = \tilde{x}+i\tilde{y}$ we end with
\begin{align*}
 &(\tilde{x}^2 + \tilde{y}^2) \Delta_{(\tilde{x}, \tilde{y})} \tu  - k \left( \frac12 \left( \frac{\partial \tu}{\partial \tilde{x}} - i \frac{\partial \tu}{\partial \tilde{y}} \right)(\tilde{x} + i\tilde{y}) + \frac12 \left( \frac{\partial \tu}{\partial \tilde{x}} + i \frac{\partial \tu}{\partial \tilde{y}} \right)(\tilde{x} - i\tilde{y}) \right) \\
&\qquad = (\tilde{x}^2 + \tilde{y}^2) \Delta_{(\tilde{x}, \tilde{y})} \tu - \frac{k}{2} \left( 2 \frac{\partial \tu}{\partial \tilde{x}} \tilde{x} + 2 \frac{\partial \tu}{\partial \tilde{y}} \tilde{y} \right)   =(\tilde{x}^2 + \tilde{y}^2) \Delta_{(\tilde{x}, \tilde{y})} \tu - k \left(  \frac{\partial \tu}{\partial \tilde{x}} \tilde{x} +  \frac{\partial \tu}{\partial \tilde{y}} \tilde{y} \right) = 0,
\end{align*}
Let us write $\mathbf{x} := (\tilde{x},\tilde{y})$ and transform our equation into $|\mathbf{x}|^2 \Delta_\mathbf{x} \tu - k \mathbf{x} \cdot \nabla_\mathbf{x} \tu = 0$. We can restate it as follows
$$
\div ( |\mathbf{x}|^{-k} \nabla_\mathbf{x} \tu ) = 0.
$$
Indeed, we have
$$
\div ( |\mathbf{x}|^{-k} \nabla_\mathbf{x} \tu ) = |\mathbf{x}|^{-k} \Delta_\mathbf{x} \tu - k |\mathbf{x}|^{-k-2} \mathbf{x} \cdot \nabla_\mathbf{x} \tu = |\mathbf{x}|^{-k-2} \left( |\mathbf{x}|^2 \Delta_\mathbf{x} \tu - k \mathbf{x} \cdot \nabla_\mathbf{x} \tu \right) = 0.
$$
Since $u \in \cC^2 (\Omega) \cap \cC_b(\overline{\Omega})$ if and only if $\tu \in \cC^2 (\R_+^2) \cap \cC_b(\overline{\R_+^2} \setminus \{(0,0)\})$, we obtain the uniqueness for the following problem on the punctured half-plane.

\begin{Th}\label{thm:1}
Let $k > 0$. Suppose that $\tu \in \cC^2 (\R_+^2) \cap \cC_b(\overline{\R_+^2} \setminus \{(0,0)\}) $ is a solution to
$$
\left\{ \begin{array}{ll}
\div ( |\mathbf{x}|^{-k} \nabla_\mathbf{x} \tu ) = 0, & \quad \mathbf{x} \in \R_+^2, \\
\tu(\tilde{x},0) = 0, & \quad \tilde{x} \neq 0.
\end{array} \right.
$$
Then $\tu \equiv 0$.
\end{Th}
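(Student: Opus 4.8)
The plan is to transport the problem back to the strip via the conformal map $w = e^z$ and then invoke the uniqueness already established there. Concretely, I would start from a solution $\tu$ as in the hypothesis and set
$u(x,y) := \tu(e^x \cos y, e^x \sin y)$ for $(x,y) \in \overline{\Omega}$, where $\Omega = \R \times (0,\pi)$; this is exactly $u(z) = \tu(e^z)$, the pullback under $w = e^z$. Since $w = e^z$ is a conformal bijection of $\Omega$ onto $\R_+^2$ that carries $\overline{\Omega}$ onto $\overline{\R_+^2} \setminus \{(0,0)\}$ (note that $|w| = e^x > 0$, so the origin is never attained), the chain of identities displayed just above the statement is reversible: it shows that $\tu$ solves $\div(|\mathbf{x}|^{-k}\nabla_{\mathbf{x}}\tu) = 0$ in $\R_+^2$ if and only if $u$ solves $\Delta u - k u_x = 0$ in $\Omega$.

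Next I would translate the boundary condition. Under $w = e^z$ the edge $\{y = 0\}$ of the strip maps onto the positive real axis $\{(\tilde{x},0) : \tilde{x} > 0\}$, while the edge $\{y = \pi\}$ maps onto the negative real axis $\{(\tilde{x},0) : \tilde{x} < 0\}$. Hence the single hypothesis $\tu(\tilde{x},0) = 0$ for $\tilde{x} \neq 0$ is equivalent to $u(x,0) = 0$ and $u(x,\pi) = 0$ for all $x \in \R$ simultaneously. Thus $u$ is a solution of the homogeneous strip problem with zero Dirichlet data on both edges.

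It then remains to place $u$ in the correct function class and conclude. By the equivalence of regularity classes recorded immediately before the statement, $\tu \in \cC^2(\R_+^2) \cap \cC_b(\overline{\R_+^2}\setminus\{(0,0)\})$ forces $u \in \cC^2(\Omega) \cap \cC_b(\overline{\Omega})$; in particular $u$ is bounded, hence of sublinear growth, and so admissible for Theorem~\ref{Thm:uniqueness}. Applying that theorem with $\lambda = 0$ (and the given real $k$) yields $u \equiv 0$ on $\Omega$, and pushing forward through the bijection $w = e^z$ gives $\tu \equiv 0$ on $\R_+^2$; continuity then extends this to all of $\overline{\R_+^2}\setminus\{(0,0)\}$.

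The step I expect to require the most care is the regularity and boundedness equivalence, namely verifying that bounded $\cC^2$ regularity is genuinely preserved in both directions under the change of variables $w = e^z$, and in particular that no spurious singularity is manufactured at the excluded point $(0,0)$. This is precisely why that point must be deleted from $\overline{\R_+^2}$ and why the matching class on the strip is $\cC_b(\overline{\Omega})$ rather than one demanding decay as $x \to -\infty$. Once this is secured, the transport of the equation and of the two boundary pieces follows directly from the conformal computation already displayed, and the conclusion is immediate from Theorem~\ref{Thm:uniqueness}.
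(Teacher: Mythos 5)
Your proposal is correct and follows essentially the same route as the paper: the paper also proves Theorem \ref{thm:1} by transporting the problem through the conformal map $w=e^z$ (with $\tu(w)=u(z)$), matching the two boundary edges $\{y=0\}$, $\{y=\pi\}$ with the two halves of the punctured real axis, using the equivalence $u\in\cC^2(\Omega)\cap\cC_b(\overline{\Omega})$ if and only if $\tu\in\cC^2(\R_+^2)\cap\cC_b(\overline{\R_+^2}\setminus\{(0,0)\})$, and then invoking the strip uniqueness result (Theorem \ref{Thm:uniqueness}, via Theorem \ref{MaxPrinciple}) with $\lambda=0$. The only cosmetic difference is that you run the correspondence from $\tu$ back to $u$, whereas the paper presents it from $u$ to $\tu$; since the map is a bijection and the computation is reversible, these are the same argument.
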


\section{Eigenvalue problem in a strip}\label{sect:4}
Let us recall that $\Omega = \R\times (0,\pi)$.
Suppose that $v\in \cC^2 (\Omega) \cap \cC(\overline{\Omega})$ is a solution of the Eigenvalue problem
\[
\Delta v = \lambda v \quad \mbox{in } \Omega, 
\]
for some fixed $\lambda\geq 0$. Then the function $u_a(x,y) = v(x,y)e^{ax}\in \cC^2 (\Omega) \cap \cC(\overline{\Omega})$, for $a\in \R$, is a solution of
\[
\Delta u_a - 2a\partial_x u_a - (\lambda - a^2)u = 0.
\]
Notice that for $a \in \left [ -\sqrt{\lambda} , \sqrt{\lambda} \right ]$ the term $\lambda - a^2 $ is nonnegative.
If we assume that $|v|\lesssim e^{-a x}$, then $u_a$ is bounded for $a \in \left [ -\sqrt{\lambda} , \sqrt{\lambda} \right ]$.

As a consequence of Theorem \ref{Thm:uniqueness}, we obtain the following.

\begin{Prop}
Let $\lambda \geq 0$.
There is no Eigenfunction to the problem
\begin{align*}
\left\{ \begin{array}{ll}
\Delta v = \lambda v, & \quad \mbox{in } \Omega, \\
v(x,0)=v(x,\pi)=0, & \quad x \in \R,
\end{array} \right.
\end{align*}
in the class $\left\{ v \in \cC^2 (\Omega) \cap \cC (\overline{\Omega}) \ : \ \mbox{there exists } a \in \left [ -\sqrt{\lambda} , \sqrt{\lambda} \right ] \mbox{ such that } |v(x,y)| \lesssim e^{a x} \right\}$.
\end{Prop}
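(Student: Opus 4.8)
The plan is to argue by contradiction and reduce the claim to the uniqueness Theorem \ref{Thm:uniqueness} by means of the exponential change of unknown recorded just above the statement. Suppose, contrary to the assertion, that a nonzero $v \in \cC^2(\Omega) \cap \cC(\overline{\Omega})$ solves $\Delta v = \lambda v$ with $v(x,0) = v(x,\pi) = 0$, and that there is $a \in [-\sqrt{\lambda},\sqrt{\lambda}]$ with $|v(x,y)| \lesssim e^{ax}$. The whole point is that multiplying $v$ by a suitably signed exponential simultaneously kills the growth and turns the eigenvalue equation into one covered by Theorem \ref{Thm:uniqueness}.

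Concretely, I would set $w(x,y) := v(x,y)\, e^{-ax}$. The computation already carried out above the statement (with $a$ replaced by $-a$) shows that $w$ solves $\Delta w + 2a\, w_x - (\lambda - a^2) w = 0$ in $\Omega$, that is, an equation of the form $\Delta w - k\, w_x - \mu w = 0$ with $k := -2a \in \R$ and $\mu := \lambda - a^2$. The constraint $a \in [-\sqrt{\lambda},\sqrt{\lambda}]$ is exactly what guarantees $\mu \geq 0$, so the hypotheses of Theorem \ref{Thm:uniqueness} on the coefficients are met. The homogeneous boundary data transfer unchanged, since $w(x,0) = v(x,0)\,e^{-ax} = 0$ and $w(x,\pi) = v(x,\pi)\,e^{-ax} = 0$. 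Finally, the growth bound on $v$ gives $|w(x,y)| = |v(x,y)|\,e^{-ax} \lesssim e^{ax} e^{-ax} = 1$, so $w \in \cC^2(\Omega)\cap\cC_b(\overline{\Omega})$ lies in the uniqueness class.

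At this stage Theorem \ref{Thm:uniqueness} applies to the homogeneous problem ($f \equiv 0$, $g \equiv h \equiv 0$) solved by $w$, forcing $w \equiv 0$ and hence $v = w\,e^{ax} \equiv 0$. This contradicts $v$ being an eigenfunction, which is nonzero by definition, and the proposition follows. There is no serious obstacle here; the equation for $w$ is immediate from the displayed identity preceding the statement. The only points demanding care are bookkeeping ones: choosing the exponential weight with the correct sign so that $w$ is bounded rather than worse behaved, and noting that the admissible interval $[-\sqrt{\lambda},\sqrt{\lambda}]$ is precisely the range of $a$ for which the zeroth-order coefficient $\lambda - a^2$ stays nonnegative — the sign condition on which the maximum-principle argument behind Theorem \ref{Thm:uniqueness} ultimately rests.
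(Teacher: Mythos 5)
Your proposal is correct and follows essentially the same route as the paper, which likewise obtains the proposition as an immediate consequence of Theorem \ref{Thm:uniqueness} via the exponential substitution $v \mapsto v\,e^{\mp ax}$ described just before the statement. You even handle the sign bookkeeping (using $e^{-ax}$ to match the class condition $|v| \lesssim e^{ax}$) more carefully than the paper's informal discussion, which writes the hypothesis as $|v| \lesssim e^{-ax}$; this is immaterial since the interval $\left[-\sqrt{\lambda},\sqrt{\lambda}\right]$ is symmetric.
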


\section{Appendix: Duality}

Suppose that $\tu \in \cC^2 (\R_+^2)$ is a function satisfying
$$
\div (|\mathbf{x}|^{-k} \nabla_\mathbf{x} \tu) = 0.
$$
Then it is clear that there is a $\tv \in \cC^2 (\R_+^2)$ such that
\begin{equation}\label{C-R-k}
|\mathbf{x}|^{-k} \nabla_\mathbf{x} \tu = {}^\perp \nabla_\mathbf{x} \tv.
\end{equation}
Hence $\nabla_\mathbf{x} \tu = |\mathbf{x}|^k {}^\perp \nabla_\mathbf{x} \tv$. Recalling that $(\mathbf{x}^\perp)^\perp = - \mathbf{x}$ we get ${}^\perp \nabla_\mathbf{x} \tu = - |\mathbf{x}|^k \nabla_\mathbf{x} \tv$ and finally
$$
\div (|\mathbf{x}|^k \nabla_\mathbf{x} \tv) = 0.
$$
We have shown the following fact.

\begin{Lem}
If $\tu \in \cC^2 (\R_+^2)$ is a solution to
\begin{equation}\label{eq:k}
\div (|\mathbf{x}|^{-k} \nabla_\mathbf{x} \tu) = 0,
\end{equation}
then there exists $\tv \in  \cC^2 (\R_+^2)$, related to $\tu$ by \eqref{C-R-k}, satisfying the dual equation
\begin{equation}\label{eq:dual}
\div (|\mathbf{x}|^k \nabla_\mathbf{x} \tv) = 0.
\end{equation}
Moreover, $\tv$ is determined up to a constant.
\end{Lem}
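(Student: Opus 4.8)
The plan is to read the relation \eqref{C-R-k} as the assertion that $\tv$ is a \emph{stream function} for the planar vector field $\mathbf{F} := |\mathbf{x}|^{-k}\nabla_\mathbf{x}\tu$, and to produce such a $\tv$ through the Poincar\'e lemma on the simply connected domain $\R_+^2$. First I would observe that $\mathbf{F}\in\cC^1(\R_+^2)$: the weight $|\mathbf{x}|^{-k}$ is smooth on $\R_+^2$, since its only singular point, the origin $\mathbf{0}$, lies on the excluded boundary line $\{\tilde y=0\}$, while $\nabla_\mathbf{x}\tu\in\cC^1(\R_+^2)$ because $\tu\in\cC^2(\R_+^2)$. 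Writing $\mathbf{F}=(F_1,F_2)$, the identity \eqref{C-R-k} is equivalent to the first-order system $\partial_{\tilde x}\tv=F_2$ and $\partial_{\tilde y}\tv=-F_1$, whose compatibility condition $\partial_{\tilde y}F_2=-\partial_{\tilde x}F_1$ is exactly $\div\mathbf{F}=0$, i.e.\ the hypothesis \eqref{eq:k}.

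Next I would construct $\tv$ explicitly. Fixing a base point $\mathbf{x}_0\in\R_+^2$, I set
$$
\tv(\mathbf{x}):=\int_\gamma\bigl(F_2\,d\tilde x-F_1\,d\tilde y\bigr),
$$
the integral taken along any piecewise-$\cC^1$ curve $\gamma\subset\R_+^2$ joining $\mathbf{x}_0$ to $\mathbf{x}$. Path-independence, and hence well-definedness of $\tv$, follows from Green's theorem together with $\div\mathbf{F}=0$ and the convexity (thus simple connectedness) of $\R_+^2$. Differentiating under the integral sign gives $\nabla_\mathbf{x}\tv=(F_2,-F_1)$, which is precisely ${}^\perp\nabla_\mathbf{x}\tv=\mathbf{F}$, i.e.\ \eqref{C-R-k}; and since $\mathbf{F}\in\cC^1$ we obtain $\nabla_\mathbf{x}\tv\in\cC^1$, so $\tv\in\cC^2(\R_+^2)$ as required.

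With $\tv$ in hand, the dual equation \eqref{eq:dual} follows by the purely algebraic manipulation already indicated above: from \eqref{C-R-k} one has $\nabla_\mathbf{x}\tu=|\mathbf{x}|^k\,{}^\perp\nabla_\mathbf{x}\tv$, and applying ${}^\perp$ together with $({}^\perp)^2=-\mathrm{Id}$ yields $|\mathbf{x}|^k\nabla_\mathbf{x}\tv=-{}^\perp\nabla_\mathbf{x}\tu$; taking the divergence and using $\div({}^\perp\nabla_\mathbf{x}\tu)=-\partial_{\tilde x\tilde y}\tu+\partial_{\tilde y\tilde x}\tu=0$, valid because $\tu\in\cC^2$, gives $\div(|\mathbf{x}|^k\nabla_\mathbf{x}\tv)=0$. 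For the uniqueness clause, if $\tv_1,\tv_2$ both satisfy \eqref{C-R-k}, then $\nabla_\mathbf{x}(\tv_1-\tv_2)=0$ on the connected set $\R_+^2$, so $\tv_1-\tv_2$ is constant. The one step demanding genuine care, and hence the main obstacle, is the path-independence of the defining line integral: one must rule out a global monodromy obstruction on the \emph{unbounded} domain $\R_+^2$, which is settled by its convexity, ensuring that the closed $1$-form $F_2\,d\tilde x-F_1\,d\tilde y$ is exact. Everything else is routine bookkeeping with the rotation operator ${}^\perp$.
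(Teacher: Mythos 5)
Your proof is correct and takes essentially the same route as the paper: existence of the conjugate $\tv$ as a stream function for $|\mathbf{x}|^{-k}\nabla_\mathbf{x}\tu$ (which the paper dismisses with ``it is clear,'' implicitly invoking exactly the Poincar\'e-lemma argument on the simply connected half-plane that you spell out), followed by the same ${}^\perp$-algebra to obtain the dual equation and the same uniqueness-up-to-a-constant argument. Your version merely supplies the line-integral construction and the $\div({}^\perp\nabla_\mathbf{x}\tu)=0$ step that the paper leaves implicit, which is a welcome level of detail rather than a different approach.
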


\begin{proof}
It remains to show that $\tv$ is determined up to a constant. Suppose that $\tv_1, \tv_2 \in \cC^2 (\R_+^2)$ satisfy \eqref{C-R-k}. Then $w := \tv_1 - \tv_2$ satisfies
$$
{}^\perp \nabla_\mathbf{x} w = 0,
$$
and therefore $w$ is a constant function.
\end{proof}

\begin{Rem}
In the case $k = 0$, equation \eqref{eq:k} reads as $\Delta_\mathbf{x} \tu = 0$, so that $\tu$ is a harmonic function on $\R_+^2$. Then, the dual equation is also $\Delta_\mathbf{x} \tv = 0$, however the above construction shows that $\tu + i \tv$ is a holomorphic function on the upper half-space. In this case, the existence of $\tv$ satisfying $\nabla_\mathbf{x} \tu = {}^\perp \nabla_\mathbf{x} \tv$ reads as Cauchy-Riemann equations.
\end{Rem}

\section*{Acknowledgements}
Tomasz Cieślak was partially supported by the National Science Centre, Poland (Grant No. 2017/26/E/ST1/00989).

\end{document}